\newtheorem{theorem}{Theorem}[section]
\newtheorem{lemma}[theorem]{Lemma}
\newtheorem{definition}[theorem]{Definition}
\begin{document}

\title{A Simple Proof of Vitali's Theorem for Signed Measures}

\author{T.~Samuel}
\address{Fachbereich 3 Mathematik, Universit\"at Bremen, Postfach 330440, Bibliothekstra{\ss}e 1, 28359 Bremen, Germany.}
\email{tony@math.uni-bremen.de}
\date{\today}

\begin{abstract}
There are several theorems named after the Italian mathematician Vitali.  In this note we provide a simple proof of an extension of Vitali's Theorem on the existence of non-measurable sets.  Specifically, we show, without using any decomposition theorems, that there does not exist a non-trivial, atom-less, $\sigma$-additive and translation invariant set function $\mathcal{L}$ from the power set of the real line to the extended real numbers with $\mathcal{L}([0,1]) = 1$.  (Note that $\mathcal{L}$ is not assumed to be non-negative.)
\end{abstract}

\keywords{Vitali's Theorem, Lebesgue Measure \and Signed Measure.}


\maketitle

\section{Introduction}

A `measure', in its simplest form, can be thought of as a set function from a given collection of subsets of a given space to the extended real numbers, which satisfies a certain number of conditions. Two natural properties that one would like a measure on the power set of the real numbers to satisfy are $\sigma$-additivity (Definition \ref{def:2}) and translation invariance (Definition \ref{def:1}).  Two other natural assumptions are that the unit interval has measure one and the measure of a single point is zero.  In 1905 Vitali \cite{Vit:1905} showed that there cannot exist a non-trivial and non-negative set function on the power set of the real numbers which satisfies these properties.  Interestingly, the result also holds when the condition of non-negativity is removed.  In this note we provide a simple proof of this latter result, without referring to any decomposition theorems such as the Hahn-Jordan Decomposition Theorem (see for instance \cite[Chapter 3]{Bogachev:2007}).

\subsection*{Outline}

In Section \ref{sec:notation} we explain relevant notation and present basic definitions.  Section \ref{sec:Theorems} contains a precise statement of Vitali's Theorem.  Since it is often remarked that this landmark result lead to the birth of modern day measure theory, in Section \ref{sec:examples}, we give a very brief introduction to Borel $\sigma$-algebras and measures via examples.  Finally, in Section \ref{Sec:sectio2} we provide our simple proof of Vitali's Theorem, without assuming the condition of non-negativity.

\section{Notation}\label{sec:notation}

We let $\mathbb{R}$ denote the set of real numbers and let $\mathbb{Q}$ denote the set of rational numbers.  The set of extended real numbers, namely the set $\mathbb{R} \cup \{ \pm \infty \}$, will be denoted by $\overline{\mathbb{R}}$.  Following, standard conventions, we let $[0, 1]$ denote the closed unit interval, we let $(0, 1)$ denote the open unit interval and we let $[0, 1)$ and $(0, 1]$ denote the half open unit intervals.  We denote the \textit{power set} of $\mathbb{R}$ by  $\mathcal{P}(\mathbb{R})$, that is the collection of all subsets of $\mathbb{R}$ including the empty-set $\emptyset$.  Here, one needs to take care, as we do not want to run into a set theoretical paradox, and so, to insure the existence of $\mathcal{P}(\mathbb{R})$ we naturally assume Zermelo-Fraenkel axioms of set theory and, for reasons which will become clear, the axiom of choice (see \cite{Cohen:1966} for a good introduction to set theory).

By a \textit{set function} we mean a function which maps some class of subsets of a given space to $\overline{\mathbb{R}}$; in our case the power set of $\mathbb{R}$.  Two definitions that are required to state Vitali's Theorem are $\sigma$-additivity and translation invariance. 

\begin{definition}\label{def:2}
A set function $\mathcal{L}: \mathcal{P}(\mathbb{R}) \to \overline{\mathbb{R}}$ is called \textit{$\sigma$-additive} if for any collection $\{ A_{n} \}_{n \in \mathcal{I}}$ of pairwise disjoint subsets of $\mathbb{R}$ we have
\begin{align*}
\mathcal{L}\,\bigg(\bigcup_{n \in \mathcal{I}} A_{n}\bigg) = \sum_{n \in \mathcal{I}} \mathcal{L}(A_{n}),
\end{align*}
for some finite or countable index set $\mathcal{I}$.
\end{definition}

\begin{definition}\label{def:1}
A set function $\mathcal{L}: \mathcal{P}(\mathbb{R}) \to \overline{\mathbb{R}}$ is said to be \textit{translation invariant} if $\mathcal{L}(A + t) = \mathcal{L}(A)$, for each $A \in \mathcal{P}(\mathbb{R})$ and every $t \in \mathbb{R}$, where $A + t \coloneqq \{ a + t : a \in A \}$.
\end{definition}

A set function $\mathcal{L}: \mathcal{P}(\mathbb{R}) \to \overline{\mathbb{R}}$ is called \textit{non-negative} if $\mathcal{L}(A) \geq 0$, for all $A \in \mathcal{P}(\mathbb{R})$, and by \textit{non-trivial} we mean that there exists an $A \in \mathcal{P}(\mathbb{R})$, such that $\mathcal{L}(A) \not\in \{0, \pm\infty\}$.  Finally, a set function $\mathcal{L}: \mathcal{P}(\mathbb{R}) \to \overline{\mathbb{R}}$ is said to be \textit{atom-less}, if $\mathcal{L}(\{ x \}) = 0$ for every point $x \in \mathbb{R}$.

\section{Vitali's Theorems}\label{sec:Theorems}

In 1905 Vitali \cite{Vit:1905} proved the following theorem.

\begin{theorem}\label{Main-Theo1}
There does not exist a non-trivial and non-negative set function $\mathcal{L}: \mathcal{P}(\mathbb{R}) \to \overline{\mathbb{R}}$ that satisfies the following.
\begin{enumerate}
\item[(a)] $\mathcal{L}([0, 1]) = 1$.
\item[(b)] If $A, B \in \mathcal{P}(\mathbb{R})$ with $A \subseteq B$, then $\mathcal{L}(A) \leq \mathcal{L}(B)$.
\item[(c)] The set function $\mathcal{L}$ is $\sigma$-additive and translation invariant.
\end{enumerate}
\end{theorem}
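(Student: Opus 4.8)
The plan is to derive a contradiction from the Vitali construction, using the axiom of choice to build a set whose measure can be neither zero nor positive. First I would extract two elementary consequences of the hypotheses. By translation invariance every singleton has the same value $a \coloneqq \mathcal{L}(\{0\}) \geq 0$; applying $\sigma$-additivity to the pairwise disjoint countable family $\{1/n\}_{n \geq 1} \subseteq [0,1]$ and then monotonicity (b) gives $\sum_{n \geq 1} a = \mathcal{L}\big(\bigcup_n \{1/n\}\big) \leq \mathcal{L}([0,1]) = 1$, which is only possible if $a = 0$. Thus $\mathcal{L}$ is atom-less, and writing $[0,1] = [0,1) \sqcup \{1\}$ and invoking finite additivity yields $\mathcal{L}([0,1)) = 1$. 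The same reasoning, applied to the disjoint decomposition of $[-1,2)$ into three unit translates of $[0,1)$, gives $\mathcal{L}([-1,2)) = 3$.

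Next I would construct the Vitali set. Declaring $x \sim y$ whenever $x - y \in \mathbb{Q}$ defines an equivalence relation on $\mathbb{R}$, each class of which meets $[0,1)$, so the axiom of choice furnishes a set $V \subseteq [0,1)$ containing exactly one representative from each class. Enumerating $\mathbb{Q} \cap [-1,1]$ as $\{q_n\}_{n \in \mathbb{N}}$, I would study the translates $V_n \coloneqq V + q_n$. Two facts are central: the $V_n$ are pairwise disjoint, since two points of $V$ differing by a rational would lie in a common class and so coincide with the chosen representative; and $[0,1) \subseteq \bigcup_n V_n \subseteq [-1,2)$, because any $x \in [0,1)$ is equivalent to some $v \in V$ with $x - v \in (-1,1) \cap \mathbb{Q}$, hence $x \in V + q_n$ for some $n$, while $V + q_n \subseteq [0,1) + [-1,1] \subseteq [-1,2)$.

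Finally I would combine these ingredients. Translation invariance forces all the $V_n$ to share one value $c \coloneqq \mathcal{L}(V) \geq 0$, so $\sigma$-additivity gives $\mathcal{L}\big(\bigcup_n V_n\big) = \sum_{n} c$. If $c = 0$ this sum vanishes, yet monotonicity together with $[0,1) \subseteq \bigcup_n V_n$ forces it to be at least $\mathcal{L}([0,1)) = 1$; if $c > 0$ the sum is $+\infty$, yet $\bigcup_n V_n \subseteq [-1,2)$ bounds it above by $\mathcal{L}([-1,2)) = 3$. Either alternative is contradictory, so no such $\mathcal{L}$ can exist.

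The main subtlety to get right is the choice of translation set: one must translate by \emph{all} rationals of a symmetric interval such as $[-1,1]$, rather than working modulo $1$, so that exact translation invariance on $\mathcal{P}(\mathbb{R})$ is preserved and the union is cleanly trapped between $[0,1)$ and an interval of finite measure. The role of non-negativity is equally worth flagging: it is precisely what makes the dichotomy $c = 0$ versus $c > 0$ exhaustive and lets monotonicity squeeze the measure of $\bigcup_n V_n$ from both sides. This is exactly the leverage that disappears in the signed setting, which is what the refinement in Section~\ref{Sec:sectio2} must work around.
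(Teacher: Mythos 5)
Your proof is correct, but it follows a genuinely different route from the paper's. You give the classical direct argument (essentially the one in Rudin, which the paper cites rather than reproduces): build the Vitali set $V \subseteq [0,1)$, translate it by every rational in $[-1,1]$, check that the translates are pairwise disjoint and that their union is sandwiched between $[0,1)$ and $[-1,2)$, and then use monotonicity together with non-negativity to exclude both $\mathcal{L}(V) = 0$ (the union would have measure $0 < 1$) and $\mathcal{L}(V) > 0$ (it would have measure $+\infty > 3$). All the individual steps check out, including the preliminary derivation of atom-lessness and of $\mathcal{L}([0,1)) = 1$ and $\mathcal{L}([-1,2)) = 3$. The paper instead obtains Theorem \ref{Main-Theo1} as a corollary of Theorem \ref{Main-Theo2}: it first shows, much as in your opening step, that conditions (a)--(c) together with non-negativity force $\mathcal{L}$ to be atom-less, and then invokes the signed version, whose proof in Section \ref{Sec:sectio2} replaces your two-sided sandwich by the wrap-around sets $B_{q} = ((B+q) \cap [0,1]) \cup ((B+(q-1)) \cap [0,1])$, which partition $[0,1)$ \emph{exactly}. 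That exact partition yields $1 = \sum_{q} \mathcal{L}(B)$, which is impossible for every value of $\mathcal{L}(B)$ in $\overline{\mathbb{R}}$, with no appeal to monotonicity or to the sign of $\mathcal{L}$. Your approach buys a shorter, self-contained proof of the non-negative case; the paper's buys the stronger theorem, and your closing remark correctly diagnoses the obstruction: in the setting of Theorem \ref{Main-Theo2} both monotonicity and non-negativity are unavailable, so neither the dichotomy $c = 0$ versus $c > 0$ nor the squeeze $1 \leq \sum_{n} c \leq 3$ survives, which is exactly why the translates must be rearranged to tile $[0,1)$ on the nose.
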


Observe that if there exists a set function satisfying the conditions of Theorem \ref{Main-Theo1}, then $\mathcal{L}$ is atom-less.  Since by translation invariance $\mathcal{L}(\{0\}) = \mathcal{L}(\{x\})$, for all $x \in \mathbb{R}$.  Thus, by condition (b) of Theorem \ref{Main-Theo1} and $\sigma$-additivity
\begin{align*}
\mathcal{L}([0,1]) \geq \mathcal{L}(\mathbb{Q} \cap [0, 1]) &= \sum_{q \in \mathbb{Q} \cap [0, 1]}  \mathcal{L}(\{q\}) = \sum_{q \in \mathbb{Q} \cap [0, 1]} \mathcal{L}(\{0\}),
\end{align*}
which can only occur if $\mathcal{L}(\{0\}) = 0$ since by condition (a), $\mathcal{L}([0, 1]) = 1$.  Therefore, a set function satisfying the conditions of Theorem \ref{Main-Theo1} is atom-less.

As mentioned in the introduction, the aim of this note is to provide a simple proof of the following extension, without referring to any technical results, such as the Hahn-Jordan Decomposition Theorem.

\begin{theorem}\label{Main-Theo2}
There does not exist a non-trivial  and atom-less set function $\mathcal{L}: \mathcal{P}(\mathbb{R}) \to \overline{\mathbb{R}}$ which is $\sigma$-additive and translation invariant with $\mathcal{L}([0, 1]) = 1$.
\end{theorem}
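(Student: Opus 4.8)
The plan is to run a Vitali-type argument, but carried out \emph{modulo $1$} so that the disjoint translates we produce tile the interval $[0,1)$ exactly. The point of working modulo $1$ is that it lets us dispense entirely with monotonicity (condition (b) of Theorem~\ref{Main-Theo1}) and with any boundedness estimate: the union of the translates will be literally $[0,1)$, whose measure we can pin down in advance, and the contradiction will then come from the elementary observation that a countable sum of copies of a single constant $c \in \overline{\mathbb{R}}$ can never equal $1$.

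First I would record two preliminary facts that follow from $\sigma$-additivity (which by Definition~\ref{def:2} includes finite additivity) together with the finiteness of $\mathcal{L}([0,1]) = 1$. Writing $[0,1] = \emptyset \sqcup [0,1]$ forces $\mathcal{L}(\emptyset) = 0$, and writing $[0,1] = [0,1) \sqcup \{1\}$ together with the atom-less hypothesis gives $\mathcal{L}([0,1)) = 1$. Next I would construct the Vitali set: define an equivalence relation on $[0,1)$ by $x \sim y$ if and only if $x - y \in \mathbb{Q}$, and invoke the axiom of choice to select a set $V \subseteq [0,1)$ containing exactly one representative of each class. For $q \in \mathbb{Q} \cap [0,1)$ let $V \oplus q$ denote the translate of $V$ by $q$ reduced modulo $1$ into $[0,1)$; concretely, splitting $V = V' \sqcup V''$ according to whether $x + q < 1$ or $x + q \geq 1$, one has $V \oplus q = (V' + q) \sqcup (V'' + (q - 1))$. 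A short verification shows that as $q$ ranges over $\mathbb{Q} \cap [0,1)$ the sets $V \oplus q$ are pairwise disjoint and their union is all of $[0,1)$.

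The crucial step is to show $\mathcal{L}(V \oplus q) = \mathcal{L}(V)$ for every $q$. Using finite additivity on the splitting and then translation invariance on each piece, one computes $\mathcal{L}(V \oplus q) = \mathcal{L}(V' + q) + \mathcal{L}(V'' + (q-1)) = \mathcal{L}(V') + \mathcal{L}(V'') = \mathcal{L}(V) =: c$. Applying $\sigma$-additivity to the tiling of $[0,1)$ then yields $1 = \mathcal{L}([0,1)) = \sum_{q} \mathcal{L}(V \oplus q) = \sum_{q} c$, a countable sum of copies of the single value $c$. If $c = 0$ this sum is $0$, whereas if $0 < |c| \leq \infty$ it is $\pm\infty$; in no case does it equal $1$, which is the desired contradiction. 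Note that the hypothesis $\mathcal{L}([0,1)) = 1$ already guarantees non-triviality, so that assumption need not be invoked separately.

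I expect the main obstacle to be this crucial step, namely verifying $\mathcal{L}(V \oplus q) = c$, since it is precisely here that the classical monotonicity-and-boundedness reasoning is replaced by a purely additive one. Care is needed to ensure that the decomposition of $V$ into $V'$ and $V''$ yields genuinely disjoint translated pieces sitting inside $[0,1)$, so that finite additivity and translation invariance apply cleanly to each; once this bookkeeping is in place, the uniform treatment of every value $c \in \overline{\mathbb{R}}$ in the final case analysis is what makes the non-negativity hypothesis superfluous.
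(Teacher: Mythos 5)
Your proposal is correct and follows essentially the same route as the paper: the same mod-$1$ Vitali construction (your $V \oplus q$ is the paper's $B_q$), the same key identity $\mathcal{L}(V \oplus q) = \mathcal{L}(V)$ obtained from finite additivity plus translation invariance on the two pieces (the paper translates the whole set first and then splits, but the computation is the same), and the same final contradiction that a countable sum of a single constant in $\overline{\mathbb{R}}$ cannot equal $1$. The only cosmetic difference is that you set up the equivalence relation on $[0,1)$ rather than $[0,1]$, which sidesteps the paper's small bookkeeping with the endpoint.
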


Observe that, since the conditions of Theorem \ref{Main-Theo2} are weaker than those of Theorem \ref{Main-Theo1}, Theorem \ref{Main-Theo1} follows as a corollary to Theorem \ref{Main-Theo2}.  However, for the interested reader the standard proof of Theorem \ref{Main-Theo1} can be found in \cite[pages 49-55]{Rudin:1987}.  Also, note that, in Theorem \ref{Main-Theo2}, the condition $\mathcal{L}([0,1]) = 1$ can be replaced by the condition that there exists an uncountable bounded set $B \in \mathcal{P}(\mathbb{R})$ with $\mathcal{L}(B) \not\in \{ 0, \pm\infty \}$.

\section{Examples of Measures}\label{sec:examples}

After the announcement of Vitali's Theorem, it was discovered that a set function satisfying the natural conditions of Theorem \ref{Main-Theo1} exists if one restricts the domain of definition to a suitable subset of $\mathcal{P}(\mathbb{R})$, namely $\mathcal{B}(\mathbb{R})$ the Borel $\sigma$-algebra of $\mathbb{R}$.  This notion of Borel $\sigma$-algebras has formed the foundations of modern day measure theory.

We will now introduce the Borel $\sigma$-algebra for $\mathbb{R}$ and the $1$-dimensional Lebesgue measure $\lambda$.  This measure is often considered to be the most important measures in modern day measure theory.  Further, observe that $\lambda$ is a set function which satisfies the conditions of Theorem \ref{Main-Theo1} with domain $\mathcal{B}(\mathbb{R})$.

\begin{definition}
The smallest subset of the power set of $\mathbb{R}$, which satisfies the following properties, is called the Borel $\sigma$-algebra of $\mathbb{R}$ and is denoted by $\mathcal{B}(\mathbb{R})$.
\begin{enumerate}
\item[(a)] $\mathbb{R} \in \mathcal{B}(\mathbb{R})$.
\item[(b)] If $A \in \mathcal{B}(\mathbb{R})$, then $A^{c} \in \mathcal{B}(\mathbb{R})$; where $A^{c}$ denotes the complement of $A$.
\item[(c)] If $\{ A_{n} \}_{n \in \mathbb{N}}$ is a countable collection of elements of $\mathcal{B}(\mathbb{R})$, then $\bigcup_{n \in \mathbb{N}} A_{n} \in \mathcal{B}(\mathbb{R})$.
\item[(d)] If $x \in \mathbb{R}$ and if $r$ is a non-negative real number, then
\begin{align*}
B(x, r) \coloneqq \left\{ y  \in \mathbb{R} : \lvert x - y \rvert \leq r \right\} \in \mathcal{B}(\mathbb{R}).
\end{align*}
\end{enumerate}
(For a proof of existence of such set we refer the reader to \cite[Theorem 1.10]{Rudin:1987}.)
\end{definition}

A \textit{non-negative Borel measure} on $\mathbb{R}$ is a set function $\mu$, defined on $\mathcal{B}(\mathbb{R})$, whose range belongs to $\overline{\mathbb{R}}_{\geq} \coloneqq \{ r \in \overline{\mathbb{R}} : r \geq 0 \}$ and which is $\sigma$-additive.  A \textit{signed Borel measure} on $\mathbb{R}^{n}$ is a $\sigma$-additive set function with domain $\mathcal{B}(\mathbb{R}^{n})$ whose range is a subset of $\overline{\mathbb{R}}$.

We remind the reader that if $A \subseteq \mathbb{R}$ and if $r$ is a non-zero positive real number, then a countable (or finite) collection of sets $\{ B(x_{k}, r_{k}) \}_{k \in \mathbb{N}}$ that cover $A$ (that is $A \subset \bigcup_{k \in \mathbb{N}} B(x_{k}, r_{k})$) with $0 < r_{k} < r$ and $x_{k} \in A$, for each $k \in \mathbb{N}$, is called a \textit{centred $r$-cover} of $A$.

\begin{definition}\label{defn:Hausdorff}
For $A \in \mathcal{B}(\mathbb{R})$ and for any $r > 0$ we define
\begin{align*}
\lambda_{r}(A) \coloneqq \inf \left\{ \sum_{k \in \mathbb{N}} 2r_{k}  : \{ B(x_{k}, r_{k} \}_{k \in \mathbb{N}} \;\text{is a centred $r$-cover of $A$} \right\}.
\end{align*}
We then define the $1$-dimensional Lebesgue measure of $A$ by $\displaystyle{\lambda(A) \coloneqq \lim_{r \to 0} \lambda_{r}(A)}$.
\end{definition}

This is a non-trivial, non-negative, $\sigma$-additivity and translation invariant set function with domain $\mathcal{B}(\mathbb{R})$ which assigns the value $r$ to the set $B(x, r)$, for each $x \in \mathbb{R}$ and every non-negative real number $r$.  For more details on the Lebesgue measure, and for a proof that it is indeed a measure, we refer the reader to either of \cite{Bogachev:2007, Falconer:1990, Rudin:1987}.  Note that, Borel $\sigma$-algebras and Borel measures can be defined for general topological spaces.  Such examples of non-negative Borel measures include the $n$-dimensional Lebesgue Measure \cite{Bogachev:2007, Falconer:1990, Rudin:1987}, the Hausdorff and packing measures \cite{Edgar:1997, Falconer:1990, Falconer:1997, Olsen:1994} and Gibbs measures \cite{Bowen:1975, Falconer:1997}.

Let $f: \mathbb{R} \to \mathbb{R}$ denote a continuous (or more generally a measurable) function such that there exist sets $A_{1}, A_{2} \in \mathcal{B}(\mathbb{R})$ with $\lambda(A_{1}) > 0$ and $\lambda(A_{2}) > 0$, and $f(a_{1}) < 0$ and $f(a_{2}) > 0$, for all $a_{1} \in A_{1}$ and $a_{2} \in A_{2}$.  One can then define a signed Borel measure $\nu$ by
\begin{align*}
\nu(A) \coloneqq \int_{A} f(x) \, d\lambda^{n}(x),
\end{align*}
for each $A \in \mathcal{B}(\mathbb{R}^{n})$.  For the definition of integration of a continuous (or a measurable) function with respect to a measure and the definition of a measurable function, see either \cite{Bogachev:2007, Rudin:1987}.  Other examples of signed Borel measures include the curvature measures defined by Federer \cite{Federer:1959}.

\section{Proof of Theorem \ref{Main-Theo2}}\label{Sec:sectio2}

The proof of Theorem \ref{Main-Theo2} follows from the application of three lemmas, and is motivated by the standard proof of Vitali Theorem (Theorem \ref{Main-Theo1}).

Define the relation $\sim$ on the closed unit interval $[0, 1]$ by $x \sim y$ if and only if $x - y \in \mathbb{Q}$, for all $x, y \in [0, 1]$.  Observe that the relation $\sim$ is an equivalence relation since:
\begin{enumerate}
\item[(a)] We have that $x \sim x$, for all $x \in \mathbb{R}$; this follows since zero is a rational number.
\item[(b)] If $x \sim y$ then $y \sim x$, for some $x, y \in [0, 1]$; this follows since, if $q$ is a rational number, then $-q$ is also a rational number.
\item[(c)] If $x \sim y$ and $y \sim z$, then $x \sim z$, for some $x, y, z \in [0, 1]$; this follows since the sum of two rational numbers is a rational number.
\end{enumerate}
We remind the reader that an \textit{equivalence class} is a complete collection of elements which are equivalent and that the set of all equivalence classes partition the space on which the equivalence relation is defined; in our case this set will be denoted by $[0, 1] / \sim$.

Let $B$ denote a subset of $[0, 1]$, which contains exactly one element from each equivalence class of  $[0, 1]/\!\sim$, where for the equivalence class containing zero we choose the point zero.  The construction of the set $B$ is possible since we have assumed the axiom of choice.  For each $q \in \mathbb{Q} \cap [0, 1)$, define $B_{q} \coloneqq (( B + q) \cap [0,1]) \cup ( (B + (q - 1)) \cap [0, 1])$.  Observe that
\begin{align}\label{lem:4}
(B + q) \cap [0,1] \cap (B + (q - 1)) \cap [0, 1] = \emptyset,
\end{align}
for each $q \in \mathbb{Q} \cap [0, 1)$, since, by definition, $B + q \subseteq [q, q+1)$ and $B+(q-1) \subseteq[q-1, q)$.

\begin{lemma}\label{lem:lem1}
Let $B_{q}$ be defined as above, then
\begin{align}\label{eq:2}
\bigcup_{q \in \mathbb{Q} \cap [0, 1)} B_{q} = [0, 1).
\end{align}
\end{lemma}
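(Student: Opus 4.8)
The plan is to prove the claimed equality \eqref{eq:2} by establishing the two inclusions separately, with the forward inclusion $[0,1) \subseteq \bigcup_{q} B_{q}$ carrying the real content. Before treating either inclusion, I would record one structural fact about the selector set $B$: if $b \in B$ is rational, then $b - 0 = b \in \mathbb{Q}$, so $b$ lies in the equivalence class of $0$; since $B$ contains exactly one representative of that class and $0$ was deliberately chosen for it, we must have $b = 0$. Thus the only rational number in $B$ is $0$ itself. In particular $1 \sim 0$ gives $1 \notin B$, so $B \subseteq [0,1)$, and $1 - q \notin B$ for every $q \in \mathbb{Q} \cap [0,1)$ (as $1-q$ is a rational number different from $0$).

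For the inclusion $\bigcup_{q} B_{q} \subseteq [0,1)$, I would note that each $B_{q}$ is by its very definition a union of two sets, each intersected with $[0,1]$, so $B_{q} \subseteq [0,1]$ is immediate and it only remains to exclude the endpoint $1$. Here I would argue that $1 \in B + q$ would force $1 - q \in B$, which is impossible by the fact above, while $1 \in B + (q-1)$ would force $2 - q \in B$, impossible since $2 - q > 1$ whereas $B \subseteq [0,1)$. Hence $1 \notin B_{q}$, so each $B_{q} \subseteq [0,1)$ and therefore so is their union.

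For the reverse inclusion I would take an arbitrary $x \in [0,1)$ and let $b \in B$ be the unique representative of the equivalence class of $x$, so that $r \coloneqq x - b$ is rational and, since both $x$ and $b$ lie in $[0,1)$, satisfies $r \in (-1,1)$. If $r \in [0,1)$, I would set $q = r$, giving $x = b + q \in (B+q) \cap [0,1] \subseteq B_{q}$; if instead $r \in (-1,0)$, I would set $q = r + 1 \in \mathbb{Q} \cap (0,1)$, giving $q - 1 = r$ and hence $x = b + (q-1) \in (B + (q-1)) \cap [0,1] \subseteq B_{q}$. In either case $x$ lands in some $B_{q}$, which proves $[0,1) \subseteq \bigcup_{q} B_{q}$ and completes the argument.

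The only genuinely delicate step, and the one I expect to be the main obstacle, is guaranteeing that the ``wrap-around'' index $q$ produced in the reverse inclusion is always a legitimate member of $\mathbb{Q} \cap [0,1)$ and that the endpoint $1$ is never accidentally generated in the forward inclusion. Both of these rest entirely on the normalisation that $0$ represents its own equivalence class, so that no nonzero rational belongs to $B$; this is precisely the role played by the clause ``for the equivalence class containing zero we choose the point zero'' in the construction of $B$, and I would flag it as the hypothesis doing all the work.
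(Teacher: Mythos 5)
Your proof is correct and takes essentially the same route as the paper: the reverse inclusion via the unique representative $b \in B$ of the class of $x$ and the case split on the sign of $x - b$ is exactly the paper's argument. You are in fact slightly more careful on the forward inclusion, where the paper simply asserts $B_{q} \subseteq [0,1)$ ``by definition'' while you explicitly rule out the endpoint $1$ using the normalisation that $0$ is the only rational element of $B$ --- a detail the paper elides but which is genuinely needed.
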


\begin{proof}
Since, by definition of the sets $B_{q}$, we have that $B_{q} \subseteq [0, 1)$, for all $q \in \mathbb{Q} \cap [0, 1)$, it follows that the left-hand-side of Equation (\ref{eq:2}) is a subset of $[0, 1)$.

To show the converse, suppose that $z \in [0, 1)$. Since $\sim$ is an equivalence relation, there exists a unique $y \in B$ such that $z \sim y$, which by definition implies that there exists a $q \in \mathbb{Q} \cap (-1, 1)$ such that $z - y = q$.  Hence, $z \in B_{q}$ if $q \geq 0$, else $z \in B_{1 + q}$, and so $z$ belongs to the left-hand-side of Equation (\ref{eq:2}).
\end{proof}

\begin{lemma}\label{lem:lem2}
For $p \neq q \in \mathbb{Q} \cap [0, 1)$, we have that $B_{q} \cap B_{p} = \emptyset$.
\end{lemma}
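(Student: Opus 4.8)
The plan is to argue by contradiction: I would assume there is a point $z \in B_q \cap B_p$ with $p \neq q$ and derive that $p = q$. The guiding idea is that membership of $z$ in $B_q$ forces $z$ to be a representative from $B$ shifted by $q$ ``modulo one'', so that two such memberships pin down a single element of $B$ together with an integer relation between $p$ and $q$.

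First I would unwind the definition of $B_q$. Since $z \in B_q = ((B+q) \cap [0,1]) \cup ((B+(q-1)) \cap [0,1])$, either $z \in B + q$ or $z \in B + (q-1)$; in both cases there is an element $b \in B$ with $z - b \in \{ q, q-1 \}$. Applying exactly the same reasoning to $z \in B_p$ produces a $b' \in B$ with $z - b' \in \{ p, p-1 \}$. Subtracting these two relations gives $b - b' = (z - b') - (z - b)$, which is a difference of two rationals (as $p, q \in \mathbb{Q}$ and hence $p-1, q-1 \in \mathbb{Q}$), so $b - b' \in \mathbb{Q}$ and therefore $b \sim b'$. Because $B$ was chosen to contain exactly one element from each equivalence class of $[0,1]/\!\sim$, this equivalence forces $b = b'$.

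With $b = b'$ in hand, the two membership conditions read $z - b \in \{ q, q-1 \}$ and $z - b \in \{ p, p-1 \}$ simultaneously, so $\{ q, q-1 \} \cap \{ p, p-1 \} \neq \emptyset$. Since $p, q \in [0,1)$ we have $p - 1, q - 1 < 0 \leq p, q$, which rules out the cross cases $q = p-1$ and $q-1 = p$; the surviving possibilities $q = p$ and $q - 1 = p - 1$ both give $q = p$, contradicting $p \neq q$. I expect the only point requiring genuine care to be the bookkeeping across the two shifts ($q$ versus $q-1$) hidden in the definition of $B_q$, since one must keep track of which of the four possible differences occurs; once the ``modulo one'' viewpoint is fixed, the reduction to $b = b'$ and then to $p = q$ is routine.
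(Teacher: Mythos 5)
Your proof is correct and follows essentially the same route as the paper: both extract representatives $b, b' \in B$ with $z - b \in \{q, q-1\}$ and $z - b' \in \{p, p-1\}$, use the uniqueness of representatives in $B$ to force $b = b'$, and then rule out $p - q = \pm 1$ using $p, q \in [0,1)$. The only difference is presentational — you package the paper's four-case analysis into the single condition $\{q, q-1\} \cap \{p, p-1\} \neq \emptyset$, which is a clean way to do the same bookkeeping.
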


\begin{proof}
We prove this by way of contradiction.  Suppose that there exist $p \neq q \in \mathbb{Q} \cap [0, 1)$, such that $B_{q} \cap B_{p} \neq \emptyset$.  Let $z$ be an element of this intersection.  By Equation (\ref{lem:4}) we have either
\begin{enumerate}
\item[(a)] $z \in (B+q) \cap [0,1]$ and $z \in (B+p) \cap [0, 1]$,
\item[(b)] $z \in (B+(q-1)) \cap [0,1]$ and $z \in (B+(p-1)) \cap [0, 1]$,
\item[(c)] $z \in (B+q) \cap [0,1]$ and $z \in (B+(p-1)) \cap [0, 1]$, or
\item[(d)] $z \in (B+(q-1)) \cap [0,1]$ and $z \in (B+p) \cap [0, 1]$.
\end{enumerate}
Cases (a) and (b), and Case (c) and (d) are symmetric, and so, it suffices to consider Cases (a) and (c) only.  

Suppose that Case (a) occurs.  Then there exist $z_{1}, z_{2} \in B$, such that $z_{1} + q = z = z_{2} + p$.  Hence, $z_{1} - z_{2} \in \mathbb{Q}$ and so, $z_{1} = z_{2}$ by definition of $B$.  Thus $p = q$, which is a contradiction, since $p$ and $q$ were chosen such that $q \neq p$.

Suppose that Case (c) occurs.  Then there exist $z_{1}, z_{2} \in B$, such that $z_{1} + q = z = z_{2} + p - 1$.  Hence, $z_{1} - z_{2} \in \mathbb{Q}$ and so, $z_{1} = z_{2}$ by definition of $B$.  Thus $p - q = 1$, which cannot happen if $p, q \in \mathbb{Q} \cap [0, 1)$.
\end{proof}

\begin{lemma}\label{lem:lem3}
For each non-trivial and atom-less set function $\mathcal{L}: \mathcal{P}(\mathbb{R}) \to \overline{\mathbb{R}}$ which is $\sigma$-additive and translation invariant, we have that $\mathcal{L}(B) = \mathcal{L}(B + q) = \mathcal{L}(B_{q})$, for all $q \in \mathbb{Q} \cap [0, 1)$.
\end{lemma}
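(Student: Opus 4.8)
The plan is to prove the two equalities in turn. The first, $\mathcal{L}(B) = \mathcal{L}(B + q)$, is immediate from translation invariance (Definition \ref{def:1}), since $B + q$ is by construction a translate of $B$. All of the genuine work lies in the second equality $\mathcal{L}(B + q) = \mathcal{L}(B_{q})$, where $B_{q}$ is the ``folded-back'' version of $B + q$ obtained by wrapping the part that overruns $1$ around into $[0, 1)$.

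First I would split $B + q$ according to whether a point lands in $[0, 1]$ or not. Since $B \subseteq [0, 1)$ (the class of $0$ is represented by $0$, so $1 \notin B$) and $0 \le q < 1$, we have $B + q \subseteq [q, q + 1) \subseteq [0, 2)$. Writing $P \coloneqq (B + q) \cap [0, 1]$ and $Q \coloneqq (B + q) \setminus [0, 1]$ therefore gives a disjoint decomposition $B + q = P \cup Q$ with $Q = (B + q) \cap (1, q + 1)$. By $\sigma$-additivity (Definition \ref{def:2}) this yields $\mathcal{L}(B + q) = \mathcal{L}(P) + \mathcal{L}(Q)$.

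Next I would identify $Q$ with the second constituent of $B_{q}$. Translating by $-1$ gives $Q - 1 = (B + (q - 1)) \cap (0, q)$, whereas the defining piece $(B + (q - 1)) \cap [0, 1]$ equals $(B + (q - 1)) \cap [0, q)$; the two sets differ only at the single point $0$. Applying translation invariance to get $\mathcal{L}(Q - 1) = \mathcal{L}(Q)$, and the atom-less hypothesis (together with $\mathcal{L}(\emptyset) = 0$, which follows from $\sigma$-additivity and $\mathcal{L}(\{0\}) = 0$) to discard that lone endpoint, I obtain $\mathcal{L}((B + (q - 1)) \cap [0, 1]) = \mathcal{L}(Q)$. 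Finally, Equation (\ref{lem:4}) guarantees that the two sets whose union defines $B_{q}$ are disjoint, so finite additivity gives $\mathcal{L}(B_{q}) = \mathcal{L}(P) + \mathcal{L}((B + (q - 1)) \cap [0, 1]) = \mathcal{L}(P) + \mathcal{L}(Q) = \mathcal{L}(B + q)$, which closes the chain of equalities.

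I expect the main obstacle to be the careful bookkeeping at the boundary point. Passing from $B + q$ to $B_{q}$ amounts to a rotation of the circle $\mathbb{R}/\mathbb{Z}$, and the fold at $1$ introduces exactly one ambiguous endpoint; it is precisely here that the atom-less hypothesis earns its keep. In the non-negative setting of Theorem \ref{Main-Theo1} one need not track this point with such care, but for a signed set function one must verify that no spurious point-mass is created or destroyed when the set is reassembled. Once this single point is neutralised by atom-lessness, the remaining manipulations are routine applications of finite additivity and translation invariance.
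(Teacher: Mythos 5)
Your proof is correct and follows essentially the same route as the paper: translate $B$ by $q$, split $B+q$ at the point $1$ into the part inside $[0,1]$ and the overflow, slide the overflow back by $1$ using translation invariance, and reassemble into $B_q$ via the disjointness in Equation (\ref{lem:4}) and additivity. Your explicit treatment of the boundary point $0$ via atom-lessness is a slightly more careful touch than the paper, which passes silently from $(B+(q-1))\cap(0,1)$ to $(B+(q-1))\cap[0,1]$ (justified there because $0$ was chosen as the representative of its class, so $0\notin B+(q-1)$ for $q\neq 1$).
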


\begin{proof}
For each $q \in \mathbb{Q} \cap [0, 1)$, we have that
\begin{align*}
\mathcal{L}(B) &= \mathcal{L}(B + q)\\
&= \mathcal{L} \, \big(((B + q) \cap [0, 1]) \, \textstyle{\bigcup\hspace{-2.5mm}\cdot} \;\, ((B + q) \cap (1, 2))\big)\\
&= \mathcal{L}((B + q) \cap [0, 1]) + \mathcal{L}((B + q) \cap (1, 2))\\
&= \mathcal{L}((B + q) \cap [0, 1]) + \mathcal{L}((B + (q - 1)) \cap (0, 1))\\
&= \mathcal{L} \, \big(((B + q) \cap [0, 1])  \, \textstyle{\bigcup\hspace{-2.5mm}\cdot} \;\, ((B + (q - 1)) \cap [0, 1])\big) = \mathcal{L}(B_{q}),
\end{align*}
where the symbol $\bigcup\hspace{-2.5mm}\cdot\;$ denotes the disjoint union of two sets.  The first and fourth equalities are due to translation invariance; the second follows since $B \subset [0, 1]$ and $q \in \mathbb{Q} \cap [0, 1)$; the third and fifty equalities are due to $\sigma$-additivity and Equation (\ref{lem:4}); the last equality follows from the definition of $B_{q}$.
\end{proof}

\begin{proof}[Proof of Theorem \ref{Main-Theo2}]
Assume the contrary, that is, there exists a non-trivial and atom-less set function $\mathcal{L}: \mathcal{P}(\mathbb{R}) \to \overline{\mathbb{R}}$ which is $\sigma$-additive and translation invariant with $\mathcal{L}([0, 1]) = 1$.  Firstly observe that $\mathcal{L}([0, 1)) = 1$, since $\mathcal{L}$ is atom-less and $\sigma$-additive.  Secondly, observe that
\begin{align*}
\mathcal{L}([0, 1)) = \mathcal{L} \,\bigg( \bigcup_{q \in \mathbb{Q} \cap [0, 1)}\hspace{-6.4mm}\cdot \hspace{5mm}B_{q} \bigg) = \sum_{q \in \mathbb{Q} \cap [0,1)} \mathcal{L} (B_{q}) = \sum_{q \in \mathbb{Q} \cap [0,1)}\mathcal{L}(B),
\end{align*}
where the first equality is due to Lemma \ref{lem:lem1}; the second equality follows from the fact that $\mathcal{L}$ is $\sigma$-additive and Lemma \ref{lem:lem2}; the final equality follows from Lemma \ref{lem:lem3}.  However, this is a contradiction, since if $\mathcal{L}(B) = 0$, then the last summand is zero; and if $\mathcal{L}(B) \neq 0$, then last summand is is either positive or negative infinity.
\end{proof}

\section*{Acknowledgements}

The author would like to thank members of the University of St Andrews and Bernd Stratmann (Universt\"at Bremen) for encouraging him to submit this note for publication. The author would also like to thank the referee for his comments.

\end{document}